\title{$\mathcal{I}^{\mathcal{K}}$-convergence}
\author{Martin Ma\v{c}aj}
\address{Department of Algebra, Geometry and Mathematical Education, Faculty of Mathematics, Physics and Informatics, Comenius University, Mlynsk\'a dolina, 842 48 Bratislava, Slovakia}
\email{macaj@dcs.fmph.uniba.sk}
\author{Martin Sleziak}
\email{sleziak@fmph.uniba.sk}
\newtheorem{PROP}{Proposition}[section]
\newtheorem{THM}[PROP]{Theorem}
\newtheorem{LM}[PROP]{Lemma}
\theoremstyle{definition}
\newtheorem{DEF}[PROP]{Definition}
\newtheorem{EXA}[PROP]{Example}
\newtheorem{REM}[PROP]{Remark}
\newcommand{\mc}[1]{\ensuremath{\mathcal{#1}}\xspace}
\newcommand{\Zobr}[3]{\ensuremath{#1\colon #2\to #3}}
\newcommand{\inv}[1]{\ensuremath{#1^{-1}}}
\newcommand{\Invobr}[2]{\inv{#1}(#2)}
\newcommand{\Obr}[2]{#1[#2]}
\newcommand{\ol}[1]{\ensuremath{\overline{#1}}}
\newcommand{\iaoi}{if and only if\xspace}
\newcommand{\Lra}{\ensuremath{\Leftrightarrow}\xspace}
\newcommand{\Ra}{\ensuremath{\Rightarrow}\xspace}
\newcommand{\emps}{\ensuremath{\emptyset}\xspace}
\newcommand{\abs}[1]{\lvert#1\rvert}
\newcommand{\sm}{\ensuremath{\setminus}}
\newcommand{\powerset}[1]{\mc P(#1)}
\newcommand{\I}{\ensuremath{\mathcal{I}}\xspace}
\newcommand{\J}{\ensuremath{\mathcal{K}}\xspace}
\newcommand{\FIh}[1]{\ensuremath{\mc F(#1)}}
\newcommand{\FI}{\ensuremath{\mc F(\I)}}
\newcommand{\FJ}{\ensuremath{\mc F(\J)}}
\newcommand{\Jlim}[1]{\operatornamewithlimits{#1\text{-}\lim}}
\newcommand{\Ilim}{\Jlim{\I}}
\newcommand{\Ihlim}{\Jhlim\I}
\newcommand{\Jhlim}[1]{#1^*\text{-}\lim}
\newcommand{\IhJ}{\IhJh{\I}{\J}}
\newcommand{\IhJh}[2]{\ensuremath{#1^{#2}}}
\newcommand{\IhJlim}{\operatorname{\IhJ\text{-}\lim}}
\newcommand{\IhJhlim}[2]{\operatorname{\IhJh{#1}{#2}\text{-}\lim}}
\newcommand{\Fin}{\ensuremath{\textrm{Fin}}}
\newcommand{\Imac}{\ensuremath{{\I_m}}}
\newcommand{\Iuni}{\ensuremath{{\I_1}}}
\newcommand{\Iprg}{\ensuremath{\I_2}}
\newcommand{\Bprg}{\ensuremath{{\mc B_2}}}
\newcommand{\subJJ}[1]{\subJb{\mc{#1}}}
\newcommand{\subJb}[1]{\ensuremath{\subset_{#1}}}
\newcommand{\subJ}{\subJJ{\J}}
\newcommand{\simJb}[1]{\sim_{#1}}
\newcommand{\simJ}{\simJb{\J}}
\newcommand{\APIJh}[2]{\ensuremath{\mathrm{AP(}#1,#2\mathrm{)}}}
\newcommand{\APIJ}{\ensuremath{\APIJh{\I}{\J}}}
\newcommand{\APIFin}{\ensuremath{\APIJh{\I}{\Fin}}}
\newcommand{\omone}{\ensuremath{\omega_1}\xspace}
\newcommand{\alone}{\ensuremath{\aleph_1}\xspace}
\newcommand{\seqq}[3]{({#1}_{#2})_{#3=1}^\infty}
\newcommand{\seq}[2]{\seqq{#1}{#2}{#2}}
\newcommand{\dseqq}[4]{({#1}_{#2})_{#3,#4=1}^\infty}
\newcommand{\dseq}[3]{\dseqq{#1}{#2,#3}{#2}{#3}}
\newcommand{\intrvl}[2]{\ensuremath{[{#1},{#2})}}
\newcommand{\R}{\mathbb R}
\newcommand{\N}{\mathbb N}
\newcommand{\enu}{\renewcommand{\theenumi}{\roman{enumi}}\renewcommand{\labelenumi}{\rm{(\theenumi)}}}
\numberwithin{equation}{section}
\begin{document}

\maketitle

\begin{abstract}
In this paper we introduce $\IhJ$-convergence which is a common
generalization of the $\I^*$-convergence of sequences, double
sequences and nets. We show that many results that were shown
before for these special cases are true for the
$\IhJ$-convergence, too.

\noindent Keywords: ideal convergence,  double sequence, filter

\noindent Mathematical Reviews subject classification: Primary 54A20, 40A05; Secondary 40B05.
\end{abstract}

\maketitle

\sectionmark

\section{Historical background and introduction}

The main topic of this paper is convergence of a function along an
ideal. As the dual notion of the convergence along a filter was
studied as well, let us start by saying a few words about the
history of this concept.

It was defined for the first time probably by Henri Cartan
\cite{CARTANULTRA} (see also \cite[p.71, Definition
1]{BOURBAKIGTENG}).
Although the notion of a limit along a filter
was defined here in the maximal possible generality -- the
considered filter could be a filter on an arbitrary set and the
limit was defined for any map from this set to a topological space
-- the attention of mathematicians in the following years was
mostly focused to two special cases.

In general topology the notion of the limit of a filter on a
topological space $X$ became  one of the two basic tools used to
describe the convergence in general topological spaces together
with the notion of a net (see \cite[Section 1.6]{ENGNEW}).

Some authors studied also the convergence of a sequence along a
filter. This notion was rediscovered independently by several
authors, we could mention A.~Robinson \cite{ROBINSONLIMITS},
A.~R.~Bernstein \cite{BERNSTEINFILT} (these authors used
ultrafilters only) or M.~Kat\v{e}tov \cite{KATETOVFIL}.

The definition of the limit along a filter can be reformulated
using ideals -- the dual notion to the notion of filter. This type
of limit of sequences was introduced independently by P.~Kostyrko,
M.~Ma\v{c}aj and T. \v{S}al\'at \cite{KMS} and F.~Nuray and
W.~H.~Ruckle \cite{NURAYRUCKLE} and studied under the name
\emph{$\I$-convergence} of a sequence by several authors (see also
\cite{DEMIRCILIMSUP,KMSS,KSW}). The motivation for this direction
of research was an effort to generalize some known results on
statistical convergence. Since the notions that we intend to
generalize in this paper stem from one of
the results on the statistical convergence, let us describe in
more detail how they evolved.

Motivated by a result of T.~\v{S}al\'at \cite{SALATSTAT} and
J.~A.~Fridy \cite{FRIDY} about statistically convergent sequences,
the authors of \cite{KMS} also defined so called
$\I^*$-convergence (a sequence $\seq xn$ being
\emph{$\I^*$-convergent} to $x$ provided that there exist
$M\in\FI$ such that the corresponding subsequence converges to
$x$) and asked for which ideals the notions of $\I$-convergence
and $\I^*$-convergence coincide. This question was answered in
\cite{KSW} where the authors showed that these notions coincide
\iaoi the ideal $\I$ satisfies the property AP, which we call
$\APIFin$ here (see also \cite{KMSS,NURAYRUCKLE}).

Later the analogues of the notion of $\I^*$-convergence were
defined and similar characterizations were obtained for double
sequences (see \cite{DASKOSMAWI,KUMARDOUBLE}) and nets (see
\cite{LAHDASNETS}).

In this paper we define $\IhJ$-convergence as a common generalization of all these types of
$\I^*$-convergence and obtain results which strengthen the results from the above papers. In
the last section we also point at neglected relation between the $\I$-convergence of sequences and
double sequences.

Although our motivation arises mainly from the results obtained
for sequences, we will work with functions. One of the reasons is
that using functions sometimes helps to simplify notation. Another
reason is that we tried to obtain the maximal possible generality
allowed by the tools we are using.

\section{Notation and preliminaries}

In this section we recall some notions and results concerning the
$\I$-convergence.

If $S$ is a set, then a system $\I\subseteq\powerset{S}$ is called
an \emph{ideal on $S$} if it is additive, hereditary and
non-empty, that is,
\begin{compactenum}
\enu
  \item $\emps\in\I$,
  \item $A,B\in\I$ \Ra $A\cup B\in\I$,
  \item $A\in\I$ $\land$ $B\subseteq A$ \Ra $B\in\I$.
\end{compactenum}
An ideal on $S$ is called \emph{admissible} if it contains all
singletons, that is, $\{s\}\in\I$ for each $s\in S$. An ideal $\I$ on
$S$ is called \emph{proper} if $S\notin\I$, a proper ideal is
called \emph{maximal} if it is a maximal element of the set of all
proper ideals on $S$ ordered by inclusion. It can be shown that a
proper ideal $\I$ is maximal \iaoi $(\forall A\subseteq S)$
$A\in\I$ $\lor$ $S\sm A\in \I$.

We will denote by $\Fin$ the ideal of all finite subsets of a
given set $S$.

The dual notion to the notion of an ideal is the notion of a
filter. A system $\mc F\subseteq\powerset S$ of subsets of $S$ is
called a \emph{filter on $S$} if
\begin{compactenum}
\enu
  \item $S\in\mc F$,
  \item $A,B\in\mc F$ \Ra $A\cap B\in\mc F$,
  \item $A\in\mc F$ $\land$ $B\supseteq A$ \Ra $B\in\mc F$.
\end{compactenum}
A filter $\mc F$ is called \emph{proper} if $\emps\notin\mc F$.

The dual notion to the notion of a maximal ideal is the notion of
\emph{ultrafilter.}

A system $\mc B\subseteq\powerset S$ is called \emph{filterbase} if
\begin{compactenum}
\enu
  \item $\mc B\ne\emps$,
  \item $A,B\in\mc B$ \Ra $(\exists C\in\mc B)$ $C\subseteq A\cap B$.
\end{compactenum}
If $\mc B$ is a filterbase, then the system
$$\mc F=\{A\supseteq B;B\in \mc B\}$$
is a filter. It is called filter \emph{generated} by the base $\mc B$.

For any ideal $\I$ on a set $S$ the system
$$\FI=\{X\sm A; A\in \I\}$$
is a filter on $S$. It is called the \emph{filter associated with
the ideal $\I$.} In a similar way we can obtain ideal from any
filter. This yields a one-to-one correspondence between ideals and
filters on a given set.

\begin{DEF}\label{DEFICONV}
Let $\I$ be an ideal on a set $S$ and $X$ be a topological space.
A function $\Zobr fSX$ is said to be \emph{$\I$-convergent to
$x\in X$} if
$$\Invobr fU=\{s\in S; f(s)\in U\} \in \FI$$
holds for every neighborhood $U$ of the point $x$.

We use the notation
$$\Ilim f = x.$$
\end{DEF}

If $S=\N$ we obtain the usual definition of $\I$-convergence of
sequences. In this case the notation $\Ilim x_n=x$ is used.

We include a few basic facts concerning $\I$-convergence for
future reference.
\begin{LM}\label{LMCOMPAR}
Let $S$ be a set, let $\I$, $\I_1$ and $\I_2$ be ideals on $S$ and
let $X$ and $Y$ be topological spaces.
\begin{enumerate}
\enu
  \item\label{itIMPROPER} If $\I$ is not proper, that is, if $\I=\powerset
    S$, then every function $\Zobr fSX$ converges to each point of $X$.
  \item\label{itCOMPAR} If $\I_1\subseteq\I_2$, then for every function $\Zobr
    {f}SX$, we have
    $$\Jlim{\I_1} f = x \qquad \text{implies} \qquad \Jlim{\I_2} f=x.$$
  \item\label{itHAUS} If $X$ is Hausdorff and $\I$ is proper,
     then every function $\Zobr fSX$ has at most one $\I$-limit.
  \item\label{itCONTIN} If $\Zobr gXY$ is a continuous mapping and
    $\Zobr fSX$ is $\I$-convergent to $x$, then $g\circ f$ is $\I$-convergent to $g(x)$.
  \item\label{itCOMPACT} If $\I$ is a maximal ideal and $X$ is compact,
    then every function $\Zobr fSX$ has an $\I$-limit.
\end{enumerate}
\end{LM}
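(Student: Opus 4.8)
The plan is to work throughout with the associated filter $\FI$ and to reduce each assertion to a property of filters. For (\ref{itIMPROPER}), note that $\I=\powerset S$ forces $\FI=\powerset S$ as well, since every $B\subseteq S$ can be written as $B=S\sm(S\sm B)$ with $S\sm B\in\I$; hence $\Invobr fU\in\FI$ holds vacuously for every neighborhood $U$ of every point, so $f$ converges to each $x\in X$. For (\ref{itCOMPAR}), the one thing to check is the monotonicity $\I_1\subseteq\I_2\Rightarrow\FIh{\I_1}\subseteq\FIh{\I_2}$, which is immediate from the defining formula $\FIh\I=\{S\sm A;A\in\I\}$; then for any neighborhood $U$ of $x$ we get $\Invobr fU\in\FIh{\I_1}\subseteq\FIh{\I_2}$, which is exactly $\Jlim{\I_2}f=x$.

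For (\ref{itHAUS}) I would argue by contradiction. If $f$ converged to distinct points $x$ and $y$, Hausdorffness supplies disjoint neighborhoods $U\ni x$ and $V\ni y$, and then $\Invobr fU,\Invobr fV\in\FI$ together with closure of $\FI$ under intersection would force $\Invobr fU\cap\Invobr fV=\Invobr f{U\cap V}=\emps$ into $\FI$; but $\emps\in\FI$ is equivalent to $S\in\I$, contradicting properness. Part (\ref{itCONTIN}) is a direct computation: given a neighborhood $V$ of $g(x)$, continuity of $g$ yields an open set $U$ with $x\in U\subseteq\Invobr gV$, whence $\Invobr{(g\circ f)}V=\Invobr f{\Invobr gV}\supseteq\Invobr fU\in\FI$, and upward closure of the filter finishes the argument.

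The substantive part is (\ref{itCOMPACT}), which I would also prove by contradiction, exploiting the maximality criterion that for every $A\subseteq S$ one has $A\in\I$ or $S\sm A\in\I$. Assume $f$ has no $\I$-limit; then for each $x\in X$ there is an open neighborhood $U_x$ of $x$ with $\Invobr f{U_x}\notin\FI$. Applying the criterion to $A=\Invobr f{U_x}$, the alternative $S\sm\Invobr f{U_x}\in\I$ would give $\Invobr f{U_x}\in\FI$, contrary to the choice of $U_x$; hence $\Invobr f{U_x}\in\I$. The family $\{U_x;x\in X\}$ is an open cover of $X$, so compactness extracts a finite subcover $U_{x_1},\dots,U_{x_n}$, and then $S=\Invobr fX=\bigcup_{i=1}^n\Invobr f{U_{x_i}}$ is a finite union of members of $\I$; by additivity $S\in\I$, contradicting that a maximal ideal is proper.

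None of the parts is technically deep; the only real care is needed in (\ref{itCOMPACT}), where one must correctly turn ``$f$ does not converge to $x$'' into the existence of a single bad neighborhood and then deploy both ingredients in the right direction -- the maximality criterion to push each preimage $\Invobr f{U_x}$ into $\I$, and compactness to keep the resulting union finite so that additivity of $\I$ can be invoked.
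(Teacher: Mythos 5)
Your proof is correct in all five parts; the paper states this lemma without proof, treating it as a collection of standard facts and remarking only that these properties are usually phrased for filters --- which is exactly the reduction to $\FI$ that you carry out, including the right use of maximality and compactness in part (v). Nothing to correct.
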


Let us note that the above properties are more frequently stated
for filters rather than ideals. Moreover, the property
\eqref{itHAUS} is in fact a characterization of Hausdorff spaces
and the property \eqref{itCOMPACT} is a characterization of
compact spaces.

\section{$\IhJ$-convergence}

\subsection{Definition and basic results}

As we have already mentioned, we aim to generalize the notion of
$\I^*$-convergence of sequences, introduced in \cite{KMS} for
sequences of real numbers and generalized to metric spaces in
\cite{KSW}. Since we are working with functions, we modify this
definition in the following way:
\begin{DEF}\label{DEFIHCONV}
Let $\I$ be an ideal on a set $S$ and let $\Zobr fSX$ be a
function to a topological space $X$. The function $f$ is called
\emph{$\I^*$-convergent} to the point $x$ of $X$ if there exists a
set $M\in\FI$ such that the function $\Zobr gSX$ defined by
$$g(s)=
\begin{cases}
f(s), &\text{if }s\in M\\
x, &\text{if }s\notin M
\end{cases}$$
is $\Fin$-convergent to $x$. If $f$ is $\I^*$-convergent to $x$,
then we write $\Ihlim f=x$.
\end{DEF}
The usual notion of $\I^*$-convergence of sequences is a special
case for $S=\N$. Similarly as for the $\I$-convergence of
sequences, we write $\Ihlim x_n=x$.

In fact, the $\I^*$-convergence was defined in \cite{KMS} in a
slightly different way -- the $\Fin$-convergence of the
restriction $g|_M$ was used. It is easy to see that these two
definitions are equivalent. Our approach will prove advantageous
when using more complicated ideals instead of $\Fin$.

In the definition of $\IhJ$-convergence we simply replace the
ideal $\Fin$ by an arbitrary ideal on the set $S$.
\begin{DEF}\label{DEFIHJ}
Let $\J$ and $\I$ be ideals on a set $S$, let $X$ be a topological
space and let  $x$ be an element of $X$. The function $\Zobr fSX$
is said to be \emph{$\IhJ$-convergent} to $x$ if there exists a
set $M\in\FI$ such that the function $\Zobr gSX$ given by
$$g(s)=
\begin{cases}
f(s), &\text{if }s\in M\\
x, &\text{if }s\notin M
\end{cases}$$
is \J-convergent to $x$. If $f$ is $\IhJ$-convergent to $x$, then
we write $\IhJlim f=x$.
\end{DEF}

As usual, in the case $S=\N$ we speak about $\IhJ$-convergence of
sequences and use the notation $\IhJlim x_n=x$.

\begin{REM}\label{REMDECOMP}
The definition of $\IhJ$-convergence can be reformulated in the
form of decomposition theorem. A function $f$ is $\IhJ$-convergent
\iaoi it can be written as $f=g+h$, where $g$ is $\J$-convergent
and $h$ is non-zero only on a set from $\I$. An analogous
observation was made in \cite{CONNORSTRONG} for the statistical
convergence of sequences and in \cite{MORICZDOUBLE} for the
statistical convergence of double sequences.
\end{REM}

\begin{REM}\label{REMTRACE}
A definition of $\IhJ$-convergence following more closely the
approach from \cite{KMS} would be: there exists $M\in\FI$ such
that the function $f|_M$ is $\J|M$-convergent to $x$
where $\J|M=\{A\cap M; A\in\J\}$ is
the trace of $\J$ on $M$. These two
definitions are equivalent but the one given in Definition
\ref{DEFIHJ} is somewhat simpler.
\end{REM}

One can show easily directly from the definitions that
$\J$-convergence implies $\IhJ$-convergence.
\begin{LM}\label{LMKRAIK}
If $\I$ and $\J$ are ideals on a set $S$ and $\Zobr fSX$ is a
function such that $\Jlim{\J} f=x$, then $\IhJlim f=x$.
\end{LM}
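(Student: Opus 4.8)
The plan is to verify the defining condition of $\IhJ$-convergence directly by choosing the simplest possible witness set $M$. Recall that in order to establish $\IhJlim f=x$ we must exhibit a set $M\in\FI$ for which the function $g$ that agrees with $f$ on $M$ and takes the constant value $x$ off $M$ is \J-convergent to $x$.

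First I would observe that the whole set $S$ lies in the associated filter $\FI$. Indeed, the ideal axioms guarantee $\emps\in\I$, and since $\FI=\{S\sm A; A\in\I\}$, taking $A=\emps$ yields $S=S\sm\emps\in\FI$. This is the only membership fact the argument needs.

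Then I would take $M=S$. With this choice every $s\in S$ satisfies $s\in M$, so the function $g$ associated with $M$ in Definition \ref{DEFIHJ} coincides with $f$ on all of $S$; that is, $g=f$. By hypothesis $f$ is \J-convergent to $x$, hence so is $g$, and the defining condition is satisfied with this particular $M$. Therefore $\IhJlim f=x$.

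The main point — and the only place where anything genuinely needs checking — is the membership $S\in\FI$; once this is noted the conclusion is an immediate unwinding of the definitions, in agreement with the sentence preceding the statement that the lemma follows ``easily directly from the definitions.'' In particular there is no real obstacle here, the substance of the proof being the single observation that the trivial witness $M=S$ is admissible.
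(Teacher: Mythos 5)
Your proof is correct and is precisely the ``directly from the definitions'' argument the paper alludes to (the paper omits an explicit proof): take $M=S\in\FI$, note $g=f$, and conclude. Nothing further is needed.
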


Using Lemma \ref{LMCOMPAR} \eqref{itCOMPAR} and the definition of
$\IhJ$-convergence we get immediately
\begin{PROP}\label{PROPCOMPAR}
Let $\I$, $\I_1$, $\I_2$, $\J$, $\J_1$ and $\J_2$ be ideals on a set $S$ such that
$\I_1\subseteq\I_2$ and $\J_1\subseteq\J_2$ and let $X$ be a topological space. Then for any
function $\Zobr fSX$ we have
\begin{gather*}
\IhJhlim{\I_1}{\J} f=x \qquad \Ra \qquad \IhJhlim{\I_2}{\J} f=x,\\
\IhJhlim{\I}{\J_1} f=x \qquad \Ra \qquad \IhJhlim{\I}{\J_2} f=x.
\end{gather*}
\end{PROP}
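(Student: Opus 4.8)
The plan is to prove each implication separately by unwinding the definition of $\IhJ$-convergence (Definition~\ref{DEFIHJ}) and invoking the monotonicity of $\J$-convergence already recorded in Lemma~\ref{LMCOMPAR}~\eqref{itCOMPAR}. For the first implication, I would start from the hypothesis $\IhJhlim{\I_1}{\J} f=x$, which by definition hands me a witnessing set $M\in\mc F(\I_1)$ such that the modified function $g$ (equal to $f$ on $M$ and to $x$ off $M$) is $\J$-convergent to $x$. The key observation is that $M$ is also a witness for $\IhJhlim{\I_2}{\J} f=x$: since $\I_1\subseteq\I_2$, the associated filters satisfy $\mc F(\I_1)\subseteq\mc F(\I_2)$, so $M\in\mc F(\I_2)$ as well, and the very same $g$ testifies to $\J$-convergence. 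Thus no new function needs to be built and the conclusion is immediate.

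For the second implication I would argue similarly but with the roles reversed. Assuming $\IhJhlim{\I}{\J_1} f=x$, I obtain $M\in\mc F(\I)$ for which the corresponding $g$ is $\J_1$-convergent to $x$. Here the set $M$ already lies in the correct filter $\mc F(\I)$, so I keep the same $M$ and the same $g$; what changes is the target ideal. Since $\J_1\subseteq\J_2$, Lemma~\ref{LMCOMPAR}~\eqref{itCOMPAR} applied to the single function $g$ gives that $\Jlim{\J_1} g=x$ implies $\Jlim{\J_2} g=x$, and this exactly yields $\IhJhlim{\I}{\J_2} f=x$.

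The two halves are genuinely dual in structure: in the first the monotonicity is exploited at the level of the filters $\mc F(\I_1)\subseteq\mc F(\I_2)$ governing the choice of witness, while in the second it is exploited through Lemma~\ref{LMCOMPAR}~\eqref{itCOMPAR} applied to the convergence of $g$. In both cases the same witnessing set and the same auxiliary function $g$ serve throughout, so there is essentially nothing to compute.

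If anything counts as an obstacle, it is only the bookkeeping fact that $\I_1\subseteq\I_2$ translates into $\mc F(\I_1)\subseteq\mc F(\I_2)$ for the associated filters; this follows directly from the definition $\mc F(\I)=\{S\sm A;A\in\I\}$, since any complement $S\sm A$ with $A\in\I_1$ also arises from $A\in\I_2$. Once this is noted, the proof is a one-line invocation of the definition in each direction, and no deeper argument is required.
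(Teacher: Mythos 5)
Your argument is correct and is exactly the one the paper intends: the authors state that the proposition follows immediately from Lemma~\ref{LMCOMPAR}~\eqref{itCOMPAR} and the definition of $\IhJ$-convergence, which is precisely your two-step unwinding (reuse the witness $M$ via $\mc F(\I_1)\subseteq\mc F(\I_2)$ for the first implication, and apply the monotonicity of $\J$-convergence to the same $g$ for the second). Nothing is missing.
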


In what follows we are going to study the relationship between the
$\I$-convergence and $\IhJ$-convergence. In particular, we will
specify the conditions under which the implications
\begin{eqnarray}
\IhJlim f=x \qquad &\Ra& \qquad \Ilim f =x, \label{IMP1}\\
\Ilim f =x \qquad &\Ra& \qquad \IhJlim f=x, \label{IMP2}
\end{eqnarray}
hold.

We start with the easier implication \eqref{IMP1}. In the case
$\J=\Fin$ this implication is known to be true for the admissible
ideals, that is, for ideals fulfilling $\J\subseteq\I$. We next
show that the same is true in general.
\begin{PROP}\label{PROPIMP1}
Let $\I,\J$ be ideals on a set $S$, let $X$ be a topological space
and let $f$ be a function from $S$ to $X$.

\begin{enumerate}
\enu
  \item\label{itIMP1:1} If the implication \eqref{IMP1} holds for some point $x\in X$
    which has at least one neighborhood different from $X$,
    then $\J\subseteq\I$. Consequently, if the implication
    \eqref{IMP1} holds in a topological space that is not
    indiscrete, then $\J\subseteq\I$.
  \item\label{itIMP1:2} If $\J\subseteq\I$, then the implication \eqref{IMP1}
    holds.
\end{enumerate}
\end{PROP}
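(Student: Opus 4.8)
The plan is to treat the two implications separately; each reduces to a short set-membership computation combined with the comparison Lemma~\ref{LMCOMPAR}~\eqref{itCOMPAR}.

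For the sufficiency \eqref{itIMP1:2} I would begin from $\IhJlim f=x$ and fix a witness $M\in\FI$ for which the modified function $g$ of Definition~\ref{DEFIHJ} satisfies $\Jlim{\J}g=x$. Since $\J\subseteq\I$, Lemma~\ref{LMCOMPAR}~\eqref{itCOMPAR} upgrades this to $\Ilim g=x$. To pass from $g$ back to $f$, fix an arbitrary neighborhood $U$ of $x$; as $f$ and $g$ agree on $M$, one has the inclusion
$$\{s\in S; f(s)\notin U\}\subseteq\{s\in S; g(s)\notin U\}\cup(S\sm M).$$
The right-hand side is a union of two members of \I{} --- the first because $\Ilim g=x$, the second because $M\in\FI$ --- so by additivity and heredity the left-hand side lies in \I as well. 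As $U$ was arbitrary, $\Ilim f=x$.

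For the necessity \eqref{itIMP1:1} I would produce, for each $A\in\J$, a single witness function that forces $A\in\I$. Let $U\ne X$ be a neighborhood of $x$ and choose $y\in X\sm U$, which is possible precisely because $U\ne X$. Given $A\in\J$, define $\Zobr fSX$ by $f(s)=y$ for $s\in A$ and $f(s)=x$ otherwise. For every neighborhood $V$ of $x$ the set $\{s\in S; f(s)\notin V\}$ is contained in $A$, hence lies in \J, so $\Jlim{\J}f=x$; by Lemma~\ref{LMKRAIK} the function $f$ is then $\IhJ$-convergent to $x$. The assumed implication \eqref{IMP1} gives $\Ilim f=x$, and specializing to $V=U$, where $\{s\in S; f(s)\notin U\}=A$ since $y\notin U$, we conclude $A\in\I$. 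As $A\in\J$ was arbitrary, $\J\subseteq\I$. The ``consequently'' clause follows at once: a non-indiscrete space possesses a nonempty open set $W\ne X$, and any $x\in W$ then admits the neighborhood $W\ne X$ required by the first part.

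The computations are elementary throughout, and the only place deserving care is the necessity direction. There the hypothesis $U\ne X$ is exactly what supplies a value $y\notin U$, which lets the two-valued witness record the set $A$ through the neighborhood $U$. In an indiscrete space the only neighborhood of any point is $X$ itself, so $\{s\in S; f(s)\notin X\}=\emps\in\I$ and every function is trivially \I-convergent to every point; hence \eqref{IMP1} would hold irrespective of \I and \J and could reveal no containment --- which is why a neighborhood strictly smaller than $X$ must be assumed.
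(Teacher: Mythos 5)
Your proof is correct and follows essentially the same route as the paper: the necessity direction uses the identical two-valued witness function ($y$ on $A$, $x$ elsewhere) together with Lemma~\ref{LMKRAIK}, merely phrased directly rather than by contraposition, and the sufficiency direction rests on the same decomposition $\Invobr f{X\sm U}\subseteq\Invobr g{X\sm U}\cup(S\sm M)$ with both pieces in \I. The only cosmetic difference is that you first upgrade $\Jlim{\J}g=x$ to $\Ilim g=x$ via Lemma~\ref{LMCOMPAR}~\eqref{itCOMPAR}, while the paper observes directly that $\Invobr g{X\sm U}=\Invobr f{X\sm U}\cap M\in\J\subseteq\I$.
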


\begin{proof}
\eqref{itIMP1:1} Suppose that $\J\nsubseteq\I$, that is, there
exists a set $A\in \J\sm\I$. Let $x$ be a point with a
neighborhood $U\subsetneqq X$ and $y\in X\sm U$. Let us define a
function $\Zobr fSX$ by
$$
f(t)=
  \begin{cases}
    x & \text{if }t\notin A, \\
    y & \text{otherwise}.
  \end{cases}
$$
Clearly, $\Jlim{\J}f=x$ and thus by Lemma \ref{LMKRAIK} we get
$\IhJlim f=x$. As $\Invobr f{X\sm U}=A\notin\I$, the function $f$
is not $\I$-convergent to $x$

\eqref{itIMP1:2} Let $X$ be any topological space, $x\in X$ and
$\Zobr fSX$. Let $\J\subseteq\I$ and $\IhJlim f=x$. By the
definition of $\IhJ$-convergence there exists $M\in\FI$ such that
$$C:=\Invobr f{X\sm U}\cap M\in\J\subseteq\I$$ for each
neighborhood $U$ of the point $x$. Consequently,
$$\Invobr f{X\sm U} \subseteq (X\sm M) \cup C \in \I$$
and thus $\Ilim f=x$.
\end{proof}

\subsection{Additive property and $\IhJ$-convergence}

Inspired by \cite{KSW} and \cite{LAHDASTOP} where the case $\J=\Fin$ and $S=\N$ is
investigated, we now concentrate on an algebraic characterization of the ideals $\I$ and $\J$
such that the implication \eqref{IMP2}
holds for each function $\Zobr fSX$. Before doing this we need to prove some auxiliary
results.

\begin{DEF}
Let $\J$ be an ideal on a set $S$. We write $A\subJ B$ whenever
$A\sm B \in \J$. If $A\subJ B$ and $B\subJ A$, then we write
$A\simJ B$. Clearly,
$$A\simJ B \qquad \Lra \qquad A\triangle B \in \J.$$

We say that a set $A$ is \emph{\J-pseudointersection} of a system
$\{A_n; n\in\N\}$ if $A\subJ A_n$ holds for each $n\in\N$.
\end{DEF}

In the case $\J=\Fin$ we obtain the notion of pseudointersection
and the relations $\subseteq^*$ and $=^*$ which are often used in
set theory (see \cite[p.102]{JUSTWEESE2}).

It is easy to see that using the symbols $\subJ$ and $\simJ$ can
be understood as another way of speaking about the equivalence
classes of the subsets of $S$ in the quotient Boolean algebra
$\powerset{S}/\J$.

In the following lemma we describe several equivalent formulations
of a condition for ideals $\I$ and $\J$ which will play crucial
role in further study.

\begin{LM}\label{LMAP}
Let $\I$ and $\J$ be ideals on the same set $S$. The following
conditions are equivalent:
\begin{enumerate}
\enu
 \item\label{PI1} For every sequence $(A_n)_{n\in\N}$ of sets from \I there is $A\in\I$
    such that $A_n\subJ A$ for all $n$'s.
 \item\label{PI2} Any sequence $(F_n)_{n\in\N}$ of sets from $\FI$ has a \J-pseudointersection in
 $\FI$.
 \item\label{PI3} For every sequence
        $(A_n)_{n\in\N}$ of sets belonging to $\I$ there exists a sequence
        $(B_n)_{n\in\N}$ of sets from $\I$  such that $A_j \simJ B_j$
        for $j\in\N$ and $B=\bigcup_{j\in\N} B_j\in\I$.
 \item\label{PI4} For every sequence of mutually disjoint sets
        $(A_n)_{n\in\N}$ belonging to $\I$ there exists a sequence
        $(B_n)_{n\in\N}$ of sets belonging to $\I$ such that $A_j \simJ B_j$
        for $j\in\N$ and $B=\bigcup_{j\in\N} B_j\in\I$.
 \item\label{PI6} For every non-decreasing sequence $A_1\subseteq A_2 \subseteq \dots \subseteq A_n \subseteq \dots$
        of sets from $\I$ there exists a sequence
        $(B_n)_{n\in\N}$  of sets belonging to $\I$ such that $A_j \simJ B_j$
        for $j\in\N$ and $B=\bigcup_{j\in\N} B_j\in\I$.
 \item\label{PI5} In the Boolean algebra $\powerset{S}/\J$ the ideal $\I$
   corresponds to a $\sigma$-directed subset, that is, every countable subset has an
   upper bound.
\end{enumerate}
\end{LM}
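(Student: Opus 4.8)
The plan is to treat condition (\ref{PI1}) as the hub and reduce everything to it, since (\ref{PI1}), (\ref{PI2}) and (\ref{PI5}) are really the same statement read on the three sides of the ideal--filter--quotient correspondence, while (\ref{PI3}), (\ref{PI4}) and (\ref{PI6}) carry the combinatorial content. First I would dispose of the two reformulations. For (\ref{PI1})\,$\Leftrightarrow$\,(\ref{PI2}) I pass to complements: if $F_n=S\sm A_n$ with $A_n\in\I$, then $F_n\in\FI$, and for $A\in\I$ the set $F:=S\sm A\in\FI$ satisfies $F\sm F_n=A_n\sm A$, so ``$A_n\subJ A$ for all $n$'' is literally the same as ``$F$ is a \J-pseudointersection of $\seq Fn$ lying in \FI''. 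For (\ref{PI1})\,$\Leftrightarrow$\,(\ref{PI5}) I would check that the order of $\powerset S/\J$ is $[A]\le[B]\Leftrightarrow A\subJ B$, so that the image of \I being $\sigma$-directed means exactly that every countable family $\{[A_n]\}$ with $A_n\in\I$ has an upper bound $[A]$ with $A\in\I$, i.e.\ $A_n\subJ A$ for all $n$ --- which is (\ref{PI1}) verbatim.

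The core is then the cycle (\ref{PI1})\,$\Rightarrow$\,(\ref{PI3})\,$\Rightarrow$\,(\ref{PI4})\,$\Rightarrow$\,(\ref{PI1}) together with (\ref{PI3})\,$\Rightarrow$\,(\ref{PI6})\,$\Rightarrow$\,(\ref{PI1}). For (\ref{PI1})\,$\Rightarrow$\,(\ref{PI3}), given $\seq An$ in \I pick $A\in\I$ with $A_n\subJ A$ and set $B_n:=A_n\cap A$; then $B_n\subseteq A$ gives $B_n\in\I$ and $\bigcup_n B_n\subseteq A\in\I$, while $A_n\triangle B_n=A_n\sm A\in\J$ gives $A_n\simJ B_n$. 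The specializations (\ref{PI3})\,$\Rightarrow$\,(\ref{PI4}) and (\ref{PI3})\,$\Rightarrow$\,(\ref{PI6}) are immediate, since disjoint and non-decreasing sequences are particular sequences.

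For the two returns I would reduce an arbitrary $\seq An$ to the allowed shape and transport the conclusion back using only finite additivity of \J. In (\ref{PI6})\,$\Rightarrow$\,(\ref{PI1}) I replace $A_n$ by the partial unions $A_n':=\bigcup_{k\le n}A_k\in\I$, apply (\ref{PI6}) to obtain $\seq{B'}n$ in \I with $A_n'\simJ B_n'$ and $B:=\bigcup_n B_n'\in\I$, and take $A:=B$; then $A_n\sm A\subseteq A_n'\sm B_n'\subseteq A_n'\triangle B_n'\in\J$, so $A_n\subJ A$. In (\ref{PI4})\,$\Rightarrow$\,(\ref{PI1}) I disjointify by $A_n':=A_n\sm\bigcup_{k<n}A_k$, which are pairwise disjoint, lie in \I and satisfy $A_n\subseteq\bigcup_{k\le n}A_k'$; applying (\ref{PI4}) yields $\seq{B'}n$ in \I with $A_n'\simJ B_n'$ and $B:=\bigcup_n B_n'\in\I$, and with $A:=B$ one gets $A_n\sm A\subseteq\bigcup_{k\le n}(A_k'\sm B_k')$, a finite union of sets of \J, hence in \J.

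The routine algebra (heredity and additivity of \I and \J, symmetric-difference identities) is straightforward; the only place demanding care is the transport step in the two returns, where the conclusion is asserted for \emph{every} original $A_n$ although the witness $A$ was produced for a modified sequence. The point to get right is that each $A_n$ is covered by finitely many of the modified sets, so $A_n\sm A$ sits inside a finite union of the \J-small differences $A_k'\triangle B_k'$ and additivity of \J closes the argument; this is precisely why a statement about a single countable pseudointersection can be recovered from control of the modified sequence alone.
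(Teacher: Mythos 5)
Your proposal is correct and follows essentially the same route as the paper: the same witness $B_n=A\cap A_n$ drives (\ref{PI1})$\Rightarrow$(\ref{PI3}), the same choice $A=\bigcup_n B_n$ drives the return, and (\ref{PI2}) and (\ref{PI5}) are handled as direct reformulations exactly as the paper does. The only difference is that you spell out the disjointification and partial-union reductions linking (\ref{PI4}) and (\ref{PI6}) back to (\ref{PI1}), which the paper dismisses as ``standard methods from the measure theory''; your transport step via finite unions of the $\J$-small differences is exactly the right way to fill that gap.
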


Note that (\ref{PI2}) is just a dual formulation of (\ref{PI1}).
Similarly, (\ref{PI5}) is the formulation of (\ref{PI1}) in the
language of Boolean algebras. The equivalence of (\ref{PI3}),
(\ref{PI4}), (\ref{PI6}) can be easily shown by the standard
methods from the measure theory. Proof of the equivalence of the
remaining conditions is similar to the proof of Proposition 1 of
\cite{BADEKO}, where the case $\J=\Fin$ is considered. We include
the proof for the sake of completeness and also to stress that the
validity of this lemma does not depend on the countability of $S$
or the assumption that $\J\subseteq\I$.

\begin{proof}
(\ref{PI1})\Ra(\ref{PI6}) Let  $A_1\subseteq A_2 \subseteq \dots
\subseteq A_n \subseteq \dots$ be a non-decreasing sequence of
sets from $\I$. Since each $A_n\in\I$, the condition (\ref{PI1})
yields the existence of a set $A\in\I$ satisfying $A_n\subJ A$ for
$n\in\N$. Let $B_n:=A\cap A_n$. Since $B_n\subseteq A$, we have
$B_n\in\I$. Moreover, $B_n\triangle A_n = A_n\sm A \in \J$, thus
$B_n\simJ A_n$. Finally, $B=\bigcup_{j\in\N} B_j\subseteq A\in\I$,
as required.

(\ref{PI3})\Ra(\ref{PI1}) Let  $(A_n)_{n\in\N}$ be a sequence of
sets belonging to $\I$. By (\ref{PI3}) there exists a sequence
$(B_n)_{n\in\N}$ of sets from $\I$ such that for all $n$ we have
$B_n \simJ A_n$ and $A:=\bigcup_{n\in\N} B_n \in \I$. From
$A_n\triangle B_n \in \J$ and $B_n\subseteq A$ we get $A_n\subJ
A$, which proves (\ref{PI1}).
\end{proof}

It is also easy to see that in  condition (\ref{PI2}) it suffices to consider only
sequences of sets from a filterbase. This reformulation of (\ref{PI2}) can be sometimes
easier to prove.

\begin{DEF}
Let $\I$, $\J$ be ideals on a set $S$. We say that $\I$ has the
\emph{additive property} with respect to $\J$, or more briefly
that $\APIJ$ holds, if any of the equivalent conditions of Lemma
\ref{LMAP} holds.
\end{DEF}

The condition AP from \cite{KSW}, which characterizes
ideals such that $\I^*$-convergence implies $\I$-convergence, is equivalent to the condition
$\APIFin$. Let us note that ideals fulfilling this condition are
often called \emph{P-ideals} (see for example \cite{BADEKO} or
\cite{FILIPOWBOLZ}).

In the following two theorems we show that the
condition $\APIJ$ is the correct generalization of conditions AP
from \cite{KSW}, \cite{LAHDASTOP} and \cite{DASKOSMAWI}. In
particular, as special cases of our results we obtain Theorem 3.1 of \cite{KSW},
Theorem 8 of \cite{LAHDASNETS} and Theorem 2 of \cite{DASKOSMAWI}.

Although we do not consider arbitrary topological spaces,
we feel that the restriction to the first countable spaces is
sufficient for most applications. For example, in \cite{KSW} the
authors work only with metric spaces and in \cite{LAHDASTOP} the
case that $X$ is a first countable $T_1$-space is considered.

\begin{THM}\label{THMIMP2}
Let $\I$ and $\J$ be ideals on a set $S$ and let $X$ be a first
countable topological space. If the ideal $\I$ has the additive
property with respect to $\J$, then for any function $\Zobr fSX$
the implication \eqref{IMP2} holds. In other words, if the
condition $\APIJ$ holds, then the $\I$-convergence implies the
\IhJ-convergence.
\end{THM}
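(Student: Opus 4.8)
The plan is to exploit first countability to collapse the (possibly uncountable) family of neighborhoods of $x$ into a single countable decreasing base, and then feed the associated sequence of ``bad'' preimages into the additive property.

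First I would fix a countable neighborhood base $\{U_n;n\in\N\}$ at $x$; replacing $U_n$ by $U_1\cap\dots\cap U_n$ if necessary, I may assume $U_1\supseteq U_2\supseteq\dots$ Writing $A_n:=\Invobr f{X\sm U_n}$, the hypothesis $\Ilim f=x$ guarantees each $A_n\in\I$, and the monotonicity of the $U_n$ makes $(A_n)_{n\in\N}$ a non-decreasing sequence in $\I$. Next I would apply the additive property in the form of condition (\ref{PI1}) of Lemma \ref{LMAP}: there is a set $A\in\I$ with $A_n\subJ A$, that is, $A_n\sm A\in\J$, for every $n$. Put $M:=S\sm A$, so that $M\in\FI$, and let $g$ be the function from Definition \ref{DEFIHJ} associated with this $M$. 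It then remains only to check that $\Jlim{\J}g=x$, for then $\IhJlim f=x$ by the very definition of $\IhJ$-convergence.

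For the verification, let $U$ be an arbitrary neighborhood of $x$ and choose $n$ with $U_n\subseteq U$. Since $g(s)=x\in U$ whenever $s\notin M$, and $g(s)=f(s)$ whenever $s\in M$, one computes $\Invobr g{X\sm U}=\Invobr f{X\sm U}\cap M=\Invobr f{X\sm U}\sm A\subseteq A_n\sm A\in\J$; heredity of $\J$ then gives $\Invobr g{X\sm U}\in\J$, so $g$ is indeed $\J$-convergent to $x$.

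The only genuinely delicate point is the role of first countability: it is precisely what allows us to replace the convergence condition ``$\Invobr f{X\sm U}\in\I$ for every neighborhood $U$ of $x$'' by a condition on a single countable sequence $(A_n)$, which is exactly the type of input the additive property is designed to handle. Once the countable decreasing base is in place the argument is a routine bookkeeping of preimages, and the translation $A_n\subJ A\Leftrightarrow A_n\sm A\in\J$ does all the work. I would expect no serious obstacle beyond making the choice of the equivalent form (\ref{PI1}) of $\APIJ$ that produces a single set $A$, whose complement serves directly as the witnessing set $M\in\FI$.
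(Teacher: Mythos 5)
Your proof is correct and follows essentially the same route as the paper: the paper applies the additive property in its dual form (condition (ii) of Lemma \ref{LMAP}, producing a $\J$-pseudointersection $A\in\FI$ of the sets $\Invobr f{U_n}$), whereas you apply condition (i) to the complements $A_n=\Invobr f{X\sm U_n}$ and take $M=S\sm A$ — a purely cosmetic difference, since the paper itself notes these conditions are dual. The remaining verification that $g$ is $\J$-convergent is the same bookkeeping in both arguments.
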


\begin{proof}
Let $\Zobr fSX$ be an \I-convergent function and let $x=\Ilim f$.
Let $\mc B=\{U_n; n\in\N\}$ be a countable base for $X$ at the
point $x$. By  the \I-convergence of $f$ we have
$$\Invobr f{U_n} \in \FI$$
for each $n$, thus by Lemma \ref{LMAP} there exists $A\in\FI$ with
$A\subJ \Invobr f{U_n}$, that is, $A\sm\Invobr f{U_n}\in\J$ for
all $n$'s.

Now it suffices to show that the function $\Zobr gSX$ given by
$g|_A=f|_A$ and $\Obr g{S\sm A}=\{x\}$ is $\J$-convergent to $x$.
As for $U_n\in\mc B$ we have
$$\Invobr g{U_n}= (S\sm A)\cup \Invobr f{U_n}= S\sm(A\sm\Invobr f{U_n}),$$
and the set $A\sm\Invobr f{U_n}$ belongs to $\J$, its complement
$\Invobr g{U_n}$ lies in $\FJ$, as required.
\end{proof}

Let us recall that a topological space $X$ is called
\emph{finitely generated space} or \emph{Alexandroff space} if any
intersection of open subsets of $X$ is again an open set (see
\cite{ARENAS}). Equivalently, $X$ is finitely generated \iaoi each
point of $x$ has a smallest neighborhood. Finitely generated
$T_1$-spaces are precisely the discrete spaces.

\begin{THM}\label{THMIMP2b}
Let $\I$, $\J$ be ideals on a set $S$ and let $X$ be a first
countable topological space which is not finitely generated. If
the implication \eqref{IMP2} holds for any function $\Zobr fSX$,
then the ideal $\I$ has the additive property with respect to
$\J$.
\end{THM}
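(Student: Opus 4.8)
We must prove the converse of Theorem \ref{THMIMP2}: assuming that $X$ is first countable but \emph{not} finitely generated, and that the implication \eqref{IMP2} holds for every function $\Zobr fSX$, we deduce that $\APIJ$ holds. The plan is to verify condition \eqref{PI2} of Lemma \ref{LMAP}, namely that every sequence $(F_n)_{n\in\N}$ of sets from $\FI$ has a $\J$-pseudointersection in $\FI$; equivalently, working with complements, we verify \eqref{PI1} for an arbitrary sequence $(A_n)_{n\in\N}$ of sets from $\I$. The whole argument is driven by the contrapositive philosophy: we take a sequence witnessing a potential failure of $\APIJ$ and build from it a single function $f$ which is $\I$-convergent but, unless the pseudointersection exists, cannot be $\IhJ$-convergent. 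The hypothesis that $\X$ is not finitely generated is exactly what lets us encode a countable sequence of ``escaping'' conditions into one function.

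\begin{proof}[Proof sketch]
First I would exploit the failure of finite generation to fix good test data in $X$. Since $X$ is first countable but not finitely generated, there is a point $x\in X$ whose smallest neighborhood does not exist, so $x$ has a countable decreasing neighborhood base $U_1\supseteq U_2\supseteq\cdots$ with $\bigcap_n U_n$ \emph{not} a neighborhood of $x$; in particular we may choose points $y_n$ with $y_n\notin U_n$ (equivalently $y_n\in X\sm U_n$) while $y_n\to x$ along the base in a controlled way. These $y_n$ and the sets $U_n$ will serve as the ``markers'' for the $n$-th set in our sequence.

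\textbf{Construction of the test function.} Let $(A_n)_{n\in\N}$ be an arbitrary sequence of sets from $\I$; by replacing $A_n$ with $\bigcup_{k\le n}A_k$ I may assume the sequence is non-decreasing, which is harmless because condition \eqref{PI6} of Lemma \ref{LMAP} is one of the equivalent forms of $\APIJ$. Using the markers above I define $\Zobr fSX$ so that $f(s)=x$ off $\bigcup_n A_n$, while on the layers $A_n\sm A_{n-1}$ the value of $f$ is placed at $y_n$ (or more precisely at a point lying in $U_{n-1}\sm U_n$, so that $f(s)\notin U_n$ exactly on a tail determined by the $A_n$'s). The point of the construction is to arrange the two key bookkeeping facts: (a) each preimage $\Invobr f{X\sm U_n}$ is contained in a set from $\I$, so that $\Ilim f=x$; and (b) for any candidate set $M\in\FI$, the intersection $\Invobr f{X\sm U_n}\cap M$ being in $\J$ for all $n$ forces $M$ to behave like a $\J$-pseudointersection of the complements $S\sm A_n$.

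\textbf{Extracting the additive property.} By hypothesis \eqref{IMP2} holds, so from $\Ilim f=x$ we obtain $\IhJlim f=x$; unwinding Definition \ref{DEFIHJ} gives a set $M\in\FI$ with $g$ \J-convergent to $x$, where $g$ agrees with $f$ on $M$ and equals $x$ off $M$. Feeding the base $U_n$ into \J-convergence yields $\Invobr g{X\sm U_n}=\Invobr f{X\sm U_n}\cap M\in\J$ for every $n$. Combining this with fact (b) of the construction, I would then read off a set $A\in\I$ (namely $A:=S\sm M$, which lies in $\I$ because $M\in\FI$) satisfying $A_n\subJ A$ for all $n$, which is precisely condition \eqref{PI1}. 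Hence $\APIJ$ holds.

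\textbf{Where the difficulty lies.} The routine parts are the additivity manipulations with $\J$ and the translation between $\I$ and $\FI$. The delicate point is the \emph{construction of $f$}: one must place the values so that the preimages $\Invobr f{X\sm U_n}$ line up \emph{exactly} with the sets $A_n$ (up to elements of $\J$), so that the $\J$-smallness of $\Invobr g{X\sm U_n}$ translates into $A_n\sm A\in\J$ rather than something weaker. The non-finite-generation of $X$ is used precisely here, to guarantee that the base $(U_n)$ genuinely separates the countably many layers; if $x$ had a smallest neighborhood the layers would collapse and the argument would produce no information. Care is also needed to ensure $\bigcup_n A_n$ need not lie in $\I$, so the function is defined globally and the only leverage comes through $M$; this is what makes the existence of the pseudointersection $A$ a genuine consequence rather than an assumption.
\end{proof}
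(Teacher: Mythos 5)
Your proposal is correct and follows essentially the same route as the paper: pick a point $x$ with a strictly decreasing countable base $(U_n)$ (possible since $X$ is first countable but not finitely generated), send the layers of the given sequence from $\I$ to markers in $U_{n-1}\sm U_n$, observe $\Invobr f{X\sm U_n}\in\I$ so that $\Ilim f=x$, and then use the set $M\in\FI$ supplied by \eqref{IMP2} to conclude $A_n\cap M\in\J$. The only cosmetic difference is that you verify condition \eqref{PI1} directly with $A=S\sm M$, whereas the paper reads off condition \eqref{PI4} via the sets $B_i=A_i\sm A$; these are equivalent by Lemma \ref{LMAP}.
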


\begin{proof}
Let $x\in X$ be an accumulation point of $X$ which does not have
a smallest neighborhood. Let $\mc B=\{U_i; i\in\N\cup\{0\}\}$ be
a countable base at $x$ such that $U_{n}\supsetneqq U_{n+1}$ and
$U_0=X$. Suppose we are given some countable family $A_n$ of
mutually disjoint sets from $\I$.

For each $n\in\N$ choose an $x_n\in U_{n-1}\sm U_{n}$. Let us
define $\Zobr fSX$ as
$$f(s)=
  \begin{cases}
    x_n & \text{if } s\in A_n, \\
    x & \text{if } s\notin\bigcup_{n\in\N} A_n.
  \end{cases}
$$

We have $\Invobr f{X\sm U_n}=\bigcup_{i=1}^n A_i\in\I$, hence
$\Ilim f=x$. By the assumption, $\IhJlim f=x$, which means that
there is $A\in\FI$ such that the function $\Zobr gSX$ given by
$g|_A=f|_A$ and $\Obr g{S\sm A}=\{x\}$ is $\J$-convergent to $x$.
This yields
$$\Invobr g{X\sm U_n}=\left(\bigcup_{i=1}^n A_i\right)\cap A = \bigcup_{i=1}^n (A_i\cap A)\in \J.$$
From this we have $A_i\cap A\in \J$, thus $B_i:=A_i\sm A \simJ A_i$.

Note that, at the same time
$$\bigcup_{i\in\N} B_i = \left(\bigcup_{i\in\N} A_i\right)\sm A \subseteq S\sm A\in \I.$$
We have shown \eqref{PI4} from Lemma \ref{LMAP}.
\end{proof}

\begin{REM}\label{REMIMP2bLOCAL}
Let us note that we have in fact proved a slightly stronger result: Whenever $x$ is an
accumulation point of $X$ such that there exists a countable basis at $x$, the point $x$ does
not have a smallest neighborhood and the implication \eqref{IMP2} holds for each function
$\Zobr fSX$ which is $\IhJ$-convergent to $x$, then the ideal $\I$ has the additive property with respect to $\J$.
\end{REM}

We next provide an example showing that Theorem \ref{THMIMP2} does
not hold in general for spaces which are not first countable.

\begin{EXA}\label{EXAJASREC}
Pointwise $\I$-convergence of sequences of continuous real functions was studied in \cite{JASINSKIRECLAW} and \cite{JASINSKIRECLAW2008}.
It can be understood as convergence of sequences of elements of the space $C_p(X)$ of all real continuous function endowed with the
topology of pointwise convergence. The authors of \cite{JASINSKIRECLAW,JASINSKIRECLAW2008} defined and studied the \I-convergence property which,
using our terminology, can be formulated as follows: A topological space $X$ has the \emph{$\I$-convergence property} if \eqref{IMP2} holds in the space
$C_p(X)$ for $S=\N$ and $\J=\Fin$.

It is known that $C_p(X)$ is first countable \iaoi $X$ is
countable, see \cite[Theorem 4.4.2]{MCCOYNTANTU}. Hence our
Theorem \ref{THMIMP2} yields that all countable spaces have the
$\I$-convergence property for every P-ideal $\I$. The same result was
obtained in \cite[Corollary 1]{JASINSKIRECLAW}.

It was shown in \cite{JASINSKIRECLAW2008} that $\R$ does not have
$\I$-convergence property for any nontrivial analytic P-ideal on $\N$.
(By trivial ideals we mean the ideals of the form $\I_C=\{A\subseteq\N; A\subseteq^* C\}$
for some $C\subseteq\N$.)
Hence, $C_p(\R)$ provides the desired counterexample, which works
for a large class of ideals on $\N$.
The definition of analytic ideals, more related results
and many examples of analytic P-ideals can be found, for example,
in \cite{FARAHMEMOIRS,FILIPOWBOLZ}.
\end{EXA}

To find a counterexample showing that Theorem \ref{THMIMP2b} is in general not true without
the assumption that the space $X$ is first countable we can use any space in which all
$\I$-convergent sequences are, in some sense, trivial.

\begin{EXA}\label{EXAIMP2b}
Let us recall that $\omone$ denotes the first uncountable ordinal
with the usual ordering. Let $X$ be the topological space on the
set $\omone \cup \{\omone\}$ with the topology such that all
points different from $\omone$ are isolated and the base at the
point $\omega_1$ consists of all sets $U_\alpha=\{\beta\in X;
\beta>\alpha\}$ for $\alpha<\omone$. Notice that if
$C\subseteq\omone$ is a set such that $\omone\in\ol C$, then
$\abs{C}=\alone$.

Now let $\I$ be an admissible ideal on $\N$ and let a function
$\Zobr f{\N}X$ be $\I$-convergent to $\omone$. We will show that
then there exists $M\in\FI$ such that $f(x)=\omega_1$ for each
$x\in M$, that is, $f|_M$ is constant. Clearly, this implies that
$f$ is $\I^*$-convergent.

For the sake of contradiction, suppose that each set $M\in\FI$
contains some point $m$ such that $f(m)\ne\omone$. Since $\Invobr
fU\in\FI$, for any neighborhood $U$ of $\omone$ in $X$ there
exists $m\in\N$ with $f(m)\in U\sm\{\omone\}$. Therefore for the
set $C=\{m\in\N; f(m)\ne\omone\}$ we have $\omone\in \ol{\Obr
fC}$. Since $\Obr fC \subseteq\omone$ and it is a countable set
contained in $\omone$, this is a contradiction.

Now, by choosing an ideal $\I$ which does not have the additive
property $\APIFin$ we obtain the desired counterexample.
\end{EXA}

\section{Examples and applications}

We have already mentioned that our motivation for definition and study of $\IhJ$-convergence
was an effort to provide a common generalization to the notion of $\I^*$-convergence which
was defined first for the usual sequences in \cite{KMS} and later generalized for sequences
of functions, double sequences and nets in \cite{GEZERKARAKUS}, \cite{KUMARDOUBLE} and
\cite{LAHDASNETS}, respectively.

In this section we show that the notion of the $\IhJ$-convergence is a correct generalization
of these notions, that is, all these notions are special cases of the $\IhJ$-convergence. We
begin with the notion of $\I^*$-convergence of double sequences.

\subsection{Double sequences}

In the study of double sequences several types of convergence are used. For our purposes, the
following one is the most important.

\begin{DEF}[\cite{BALDEMS,PRINGSHEIM}]\label{DEFPRG}
A double sequence $\dseq xmn$ of points of a topological space $X$
is said to converge to $x$ \emph{in Pringsheim's sense} if for
each neighborhood $U$ of the point $x$
$$(\exists k\in\N) (\forall m\geq k) (\forall n\geq k) x_{m,n}\in U.$$
\end{DEF}

It is easy to see that the convergence in Pringsheim's sense is equal to
the $\I$-convergence along the \emph{Pringsheim's ideal}
$\Iprg$ on $\N\times\N$
whose dual filter $\FIh{\Iprg}$ is given by the filterbase
$$\Bprg=\{\intrvl{m}\infty\times\intrvl{m}\infty; m\in\N\}.$$
We will give a different description of this ideal in Example \ref{EXAMACAJID}.

Altogether four types of convergence of double sequences were
studied in \cite{BALDEMS}. All of them can be described as
$\I$-convergences using appropriate ideals on $\N\times\N$ (see
Figure \ref{FIGIDEALS}). In fact, we denote the Pringsheim's ideal by
$\Iprg$ in order to be consistent
with the notation of \cite{BALDEMS}.

\begin{figure}
\begin{center}
\begin{tabular}{cccc}
\includegraphics{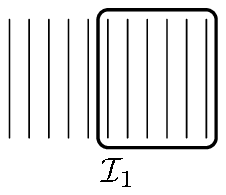} & \includegraphics{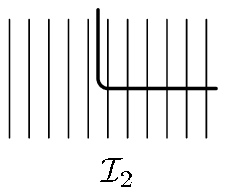} & \includegraphics{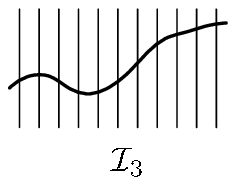}
& \includegraphics{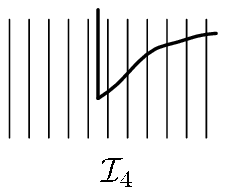}
\end{tabular}
\end{center}
\caption{Ideals from \cite{BALDEMS} illustrated by depicting
typical sets from the filterbase. Vertical lines represent the
partition of $\N\times\N$ into countably many infinite sets
$\{i\}\times\N$.}\label{FIGIDEALS}
\end{figure}

The $\I^*$-convergence of double sequences studied in \cite{KUMARDOUBLE} and
\cite{DASKOSMAWI} is the same as $\IhJh{\I}{\Iprg}$-convergence in $\N\times\N$.
Therefore, as a special case of our Theorems
\ref{THMIMP2} and \ref{THMIMP2b} for $S=\N\times\N$ and $\J=\Iprg$
we obtain Proposition 4.2 of \cite{KUMARDOUBLE}, and Theorems 3 and 4
of \cite{DASKOSMAWI}. Note that in \cite{KUMARDOUBLE} and
\cite{DASKOSMAWI} only the ideals containing $\Iprg$ are
considered, see Proposition \ref{PROPIMP1}.

\subsection{Further examples}

In order to avoid technical details we will define neither the notions of pointwise and
uniform $\I^*$-convergence of a sequence of functions defined in \cite{GEZERKARAKUS}, nor
the notions of the $\I$- and $\I^*$-convergence of nets defined in \cite{LAHDASNETS}.

We just mention that, given an ideal $\mc L$ on $\N$, the uniform $\mc L^*$-convergence of
a sequence of functions defined on $X$ is precisely the $\IhJ$-convergence for the ideal $\I$
on $X\times\N$ given by the filterbase $\{X\times (\N\sm A); A\in\mc L\}$ and the ideal $\J$
given by the filterbase $\{X\times (\N\sm A); A\in\Fin\}$. The pointwise $\mc
L^*$-convergence can be obtained if $\I$ is the ideal of all sets $A\subseteq X\times\N$ such
that for each $x\in X$ the $x$-cut $A_x:=\{n\in\N; (x,n)\in A\}$ belongs to $\mc L$, and $\J$
consists of all sets such that each $A_x$ is finite.

In both cases it can be shown that the condition $\APIJh{\I}{\J}$ is equivalent to the condition $\APIJh{\mc{L}}{\Fin}$.
Hence our Theorems \ref{THMIMP2} and \ref{THMIMP2b} imply that these two types of $\I$-convergence
are equivalent to corresponding $\I^*$-convergence \iaoi $\APIJh{\mc{L}}{\Fin}$ holds. This observation has been made already in \cite{GEZERKARAKUS}.

Similarly, the concept of $\I^*$-convergence of nets is a special case of $\IhJ$-convergence
and Theorem 12 of \cite{LAHDASNETS} can be obtained from our Theorems \ref{THMIMP2} and
\ref{THMIMP2b} by choosing the section filter of the considered directed set for $\J$ (the
definition of the section filter
can be found, for example, in \cite[p.60]{BOURBAKIGTENG}).

\subsection{$\I$-convergence of double sequences}

We close this paper with an observation concerning the
$\I$-convergence of double sequences.

Notice that any bijection between sets $S$ and $T$
naturally gives rise to a bijection between $X^S$ and $X^T$, an
isomorphism between Boolean algebras $\powerset{S}$ and
$\powerset{T}$ and also to an isometric isomorphism between linear
normed spaces $\ell_\infty(S)$ and $\ell_\infty(T)$.
It is easy to see that this correspondence
preserves also the properties related to the notion of
$\I$-convergence. Hence results about $\I$-convergence for a given
set $S$ do not depend on the natural (partial) ordering on the set $S$ in any way.
Thus these results can be transferred to any set of the same cardinality.

We can use any bijection between $\N$ and
$\N\times\N$ to relate results about sequences and double
sequences. It is interesting to note that several authors working
in this area did not realize this possibility.

The basic results on $\I$-convergence (such as additivity,
multiplicativity, uniqueness of limit in Hausdorff spaces) need
not be shown again for double sequences, since they follow from
the analogous result for sequences; although the proofs are rather
trivial in both cases. But there are also some more interesting
concepts that were defined for double sequences in a such way that
they are preserved by this correspondence. Namely, this is true
for the notions of $\I$-Cauchy double sequences, extremal
$\I$-limit points ($\I$-limit superior and $\I$-limit inferior)
and $\I$-cluster points.

In this way, some results from the papers
\cite{DASMALIKEXTREMALDOUBLE,GURDALSAHINEREXPTS,KUMARICORE,TRIPDS}
on the above mentioned concepts can be obtained from the results
of \cite{DEMIRCILIMSUP,DEMSICAUCH,KMSS,LAHDASLIMSUP}. Actually,
the fact that a double sequence is $\I$-convergent \iaoi it is
$\I$-Cauchy is shown in Proposition 5 of \cite{DEMSICAUCH} using a
bijection between $\N$ and $\N\times\N$.

The above observation can also be used to get an alternative description
the ideal $\Iprg$.

\begin{EXA}\label{EXAMACAJID}
A basic example of an ideal which does not have the property $\APIFin$ is the ideal $\Imac$
given in Example 1.1.(g) of \cite{KSW} and Example (XI) of \cite{KMS}. It is defined as
follows: Suppose we are given any partition $\N=\bigcup_{i=1}^n D_i$ of $\N$ into countably
many infinite sets. A set $A\subseteq\N$ belongs to $\Imac$ \iaoi it intersects only finitely
many $D_i$'s.
Of course, choosing different partitions of $\N$ can lead to ideals which are different, but
equivalent from the point of view of $\I$-convergence.

We can also use any countable set instead of $\N$.
In particular, as observed in
the proof of Corollary 4 in \cite{BALDEMS}, by choosing the
partition of $\N\times\N$ into sets $D_i=\{(n,i);n\ge
i\}\cup\{(i,k); k\ge i\}$ we obtain the ideal $\Iprg$ in this way.
Similarly, by using $D_i=\{i\}\times\N$ we get the ideal $\Iuni$ of
\cite{BALDEMS} (see Figure \ref{FIGDECOMP}).
Thus the ideals $\Iuni$, $\Iprg$ and $\Imac$ are essentially the same. In particular, this
gives an alternative proof that $\APIJh{\Iprg}{\Fin}$ and $\APIJh{\Iuni}{\Fin}$ fail, see
\cite{DASKOSMAWI}.
\end{EXA}

\begin{figure}
\begin{center}
\begin{tabular}{cccc}
\includegraphics{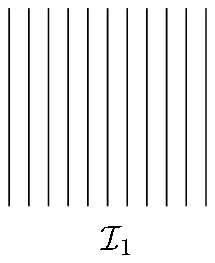} & \includegraphics{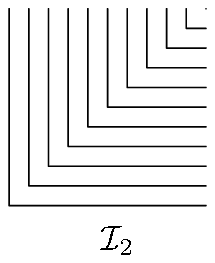}
\end{tabular}
\end{center}
\caption{Partitions of $\N\times\N$ defining the ideals $\Iuni$ and
$\Iprg$}\label{FIGDECOMP}
\end{figure}

\medskip
\noindent {\bf Acknowledgment}. We would like to thank the referees for
suggesting several improvements and corrections. In particular,
one of the referees pointed our attention to results of
\cite{JASINSKIRECLAW2008}, which were used to make Example
\ref{EXAJASREC} more general.


\begin{thebibliography}{10}

\bibitem{ARENAS}
F.~G. Arenas, \emph{{Alexandroff spaces}}, Acta Math. Univ. Comenian.,
  \textbf{68(1)} (1999), 17--25.

\bibitem{BALDEMS}
M.~Balcerzak and K.~Dems, \emph{Some types of convergence and related {B}aire
  systems}, Real Anal. Exchange, \textbf{30(1)} (2004), 267--276.

\bibitem{BADEKO}
M.~Balcerzak, K.~Dems, and A.~Komisarski, \emph{Statistical convergence and
  ideal convergence for sequences of functions}, J. Math. Anal. Appl.,
  \textbf{328(1)} (2007), 715--729.

\bibitem{BERNSTEINFILT}
A.~R. Bernstein, \emph{A new kind of compactness for topological spaces}, Fund.
  Math., \textbf{66} (1970), 185--193.

\bibitem{BOURBAKIGTENG}
N.~Bourbaki, \emph{Elements of mathematics. general topology. chapters
  {I}-{IV}}, Springer-Verlag, Berlin, 1989.

\bibitem{CARTANULTRA}
H.~Cartan, \emph{Filtres et ultrafiltres}, C. R. Acad. Sci. Paris, \textbf{205}
  (1937), 777--779.

\bibitem{CONNORSTRONG}
J.~S. Connor, \emph{The statistical and strong {$p$}-{C}es{\'a}ro convergence
  of sequences}, Analysis, \textbf{8} (1988), 47--63.

\bibitem{DASKOSMAWI}
P.~Das, P.~Kostyrko, P.~Malik, and W.~Wilczy{\'n}ski, \emph{{I} and
  {I*}-convergence of double sequences}, Math. Slovaca, \textbf{58(5)} (2008),
  605--620.

\bibitem{DASMALIKEXTREMALDOUBLE}
P.~Das and P.~Malik, \emph{On extremal {I}-limit points of double sequences},
  Tatra Mt. Math. Publ., \textbf{40} (2008), 91--102.

\bibitem{DEMIRCILIMSUP}
K.~Demirci, \emph{{$\mathcal I$}-limit superior and limit inferior}, Math.
  Commun., \textbf{6} (2001), 165--172.

\bibitem{DEMSICAUCH}
K.~Dems, \emph{On {$\mathcal{I}$}-{C}auchy sequences}, Real Anal. Exchange,
  \textbf{30(1)} (2004), 123--128.

\bibitem{ENGNEW}
R.~Engelking, \emph{General topology}, Heldermann Verlag, Berlin, 1989, Revised
  and completed edition, Sigma Series in Pure Mathematics, Vol. 6.

\bibitem{FARAHMEMOIRS}
I.~Farah, \emph{Analytic quotients}, Mem. Amer. Math. Soc., \textbf{148} (2000).

\bibitem{FILIPOWBOLZ}
R.~Filip{\'o}w, N.~Mrozek, I.~Rec{\l}aw, and P.~Szuca, \emph{Ideal convergence
  of bounded sequences}, J. Symbolic Logic, \textbf{72(2)} (2007), 501--512.

\bibitem{FRIDY}
J.~A. Fridy, \emph{On statistical convergence}, Analysis, \textbf{5} (1985),
  301--313.

\bibitem{GEZERKARAKUS}
F.~Gezer and S.~Karakus, \emph{{$\mathcal I$} and {$\mathcal I^*$} convergent
  function sequences}, Math. Commun., \textbf{10} (2005), 71--80.

\bibitem{GURDALSAHINEREXPTS}
M.~G{\"{u}}rdal and A.~Sahiner, \emph{Extremal {$\mathcal I$}-limit points of
  double sequences}, Applied Mathematics E-Notes, \textbf{8} (2008), 131--137.

\bibitem{JASINSKIRECLAW}
J.~Jasi{\'n}ski and I.~Rec{\l}aw, \emph{Ideal convergence of continuous
  functions}, Topology Appl., \textbf{153} (2006), 3511--3518.

\bibitem{JASINSKIRECLAW2008}
J.~Jasi{\'n}ski and I.~Rec{\l}aw, \emph{On spaces with the ideal convergence property}, Colloq. Math.,
  \textbf{111} (2008), 43--50.

\bibitem{JUSTWEESE2}
W.~Just and M.~Weese, \emph{Discovering modern set theory {II}: Set-theoretic
  tools for every mathematician}, Amer. Math. Soc., Providence, RI, 1997,
  Graduate Studies in Mathematics 18.

\bibitem{KATETOVFIL}
M.~Kat\v{e}tov, \emph{Products of filters}, Comment. Math. Univ. Carolin.,
  \textbf{9} (1968), 173--189.

\bibitem{KMS}
P.~Kostyrko, M.~Ma\v{c}aj, and T.~{\v{S}}al\'at, \emph{{Statistical convergence
  and {I}-convergence}},  (1999), Unpublished;
  \url{http://thales.doa.fmph.uniba.sk/macaj/ICON.pdf}.

\bibitem{KMSS}
P.~Kostyrko, M.~Ma\v{c}aj, T.~{\v{S}}al\'at, and M.~Sleziak, \emph{{$\mathcal
  I$}-convergence and extremal {$\mathcal I$}-limit points}, Math. Slovaca,
  \textbf{55(4)} (2005), 443--464.

\bibitem{KSW}
P.~Kostyrko, T.~{\v{S}}al{\'a}t, and W.~Wilczy{\'n}ski,
  \emph{{$\mathcal{I}$}-convergence}, Real Anal. Exchange, \textbf{26}
  (2000-2001), 669--686.

\bibitem{KUMARDOUBLE}
V.~Kumar, \emph{On {I} and {I${}^*$}-convergence of double sequences}, Math.
  Commun., \textbf{12} (2007), 171--181.

\bibitem{KUMARICORE}
V.~Kumar and N.~Singh, \emph{I-core of double sequences}, Int. J. Contemp.
  Math. Sci., \textbf{2(23)} (2007), 1137--1145.

\bibitem{LAHDASLIMSUP}
B.~K. Lahiri and P.~Das, \emph{Further results on {I}-limit superior and limit
  inferior}, Math. Commun., \textbf{8} (2003), 151--156.

\bibitem{LAHDASTOP}
B.~K. Lahiri and P.~Das, \emph{{$I$}-convergence and {$I^*$}-convergence in topological
  spaces}, Math. Bohem., \textbf{130(2)} (2005), 153--160.

\bibitem{LAHDASNETS}
B.~K. Lahiri and P.~Das, \emph{{$I$}-convergence and {$I^*$}-convergence of nets}, Real Anal.
  Exchange, \textbf{33(2)} (2007), 431--442.

\bibitem{MCCOYNTANTU}
R.~A. McCoy and I.~Ntantu, \emph{Topological properties of spaces of continuous
  functions}, Springer-Verlag, Berlin, 1988, Lecture Notes in Mathematics 1315.

\bibitem{MORICZDOUBLE}
F.~M{\'o}ricz, \emph{Statistical convergence of multiple sequences}, Arch.
  Math., \textbf{81} (2003), 82--89.

\bibitem{NURAYRUCKLE}
F.~Nuray and W.~H. Ruckle, \emph{{Generalized statistical convergence and
  convergence free spaces}}, J. Math. Anal. Appl., \textbf{245(2)} (2000),
  513--527.

\bibitem{PRINGSHEIM}
A.~Pringsheim, \emph{Zur {T}heorie der zweifach unendlichen {Z}ahlenfolgen},
  Math. Ann., \textbf{53(3)} (1900), 289--321.

\bibitem{ROBINSONLIMITS}
A.~Robinson, \emph{On generalized limits and linear functionals}, Pacif. J.
  Math., \textbf{14(1)} (1964), 269--283.

\bibitem{SALATSTAT}
T.~{\v{S}}al\'at, \emph{On statistically convergent sequences of real numbers},
  Math. Slovaca, \textbf{30(2)} (1980), 139--150.

\bibitem{TRIPDS}
C.~Tripathy and B.~C. Tripathy, \emph{On {$I$}-convergent double sequences},
  Soochow J. Math., \textbf{31} (2005), no.~4, 549--560.

\end{thebibliography}

\end{document}